\documentclass[11pt]{amsart}
\usepackage[colorlinks,citecolor=red,pagebackref,hypertexnames=false]{hyperref}

\textwidth14cm
\textheight21cm
\evensidemargin1.1cm
\oddsidemargin1.1cm

\addtolength{\headheight}{5.2pt}    %% leave room for symbol in header

\makeatletter
\def\@tocline#1#2#3#4#5#6#7{\relax
  \ifnum #1>\c@tocdepth % then omit
  \else
    \par \addpenalty\@secpenalty\addvspace{#2}%
    \begingroup \hyphenpenalty\@M
    \@ifempty{#4}{%
      \@tempdima\csname r@tocindent\number#1\endcsname\relax
    }{%
      \@tempdima#4\relax
    }%
    \parindent\z@ \leftskip#3\relax \advance\leftskip\@tempdima\relax
    \rightskip\@pnumwidth plus4em \parfillskip-\@pnumwidth
    #5\leavevmode\hskip-\@tempdima
      \ifcase #1
       \or\or \hskip 1em \or \hskip 2em \else \hskip 3em \fi%
      #6\nobreak\relax
    \dotfill\hbox to\@pnumwidth{\@tocpagenum{#7}}\par
    \nobreak
    \endgroup
  \fi}
\makeatother

% also eufrak if not amsmath
\usepackage{tikz,amsthm,amsmath,amstext,amssymb,amscd,epsfig,euscript, mathrsfs, dsfont,pspicture,multicol, mathtools,graphpap,graphics,graphicx,times,enumerate,subfig,sidecap,wrapfig,color}
%symbols

%\usepackage[dvips]{color}
%\usepackage[oztex,hideboxes]{boxedeps}
%\usepackage{frcursive,diagrams}
\usepackage{ hyperref}
\usepackage{enumerate}

 \numberwithin{equation}{section}

%Alphabets
\def\om{{\Omega}}
\def\hm{{\omega}}

\def\R{{\mathbb{R}}}

\def\bN{{\mathbb{N}}}

\def\G{{\Gamma}}

\def\K{{\mathcal{T}}}
\def\P{{\mathcal{P}}}

\def\HH{{\mathcal{H}}}

%Random shortcut symbols
\def\ve{\varepsilon}

\renewcommand{\d}{{\partial}}

\DeclareMathOperator{\diam}{diam}
 					% capacity 
					%oscillation 
 					%tangent measures
 					%BMO
 						%lipschitz constant 
 					%modulus
						%smallest convex set, i.e. the convex hull
 				%smallest closed convex set			
 					%dimension	
\def\dist{\mathop\mathrm{dist}} 						%distance
						%identity
 						%Interior
						%kernel
					%support
 						%weak as in weak topologies
\def\loc{\mathop\mathrm{loc}}						%locally
 						%cofactor
			%essential supremum
				%essential infimum
			%divergence

%Brackets

%\def\barintgerm_#1{\mathchoice
%{\mathop{\vrule width 6pt height 3 pt depth -2.5pt
%\kern -8.8pt \intop}\nolimits_{#1}}%
%{\mathop{\vrule width 5pt height 3 pt depth -2.6pt
%\kern -6.5pt \intop}\nolimits_{#1}}%
%{\mathop{\vrule width 5pt height 3 pt depth -2.6pt
%\kern -6pt \intop}\nolimits_{#1}}%
%{\mathop{\vrule width 5pt height 3 pt depth -2.6pt \kern -6pt
%\intop}\nolimits_{#1}}}

\def\XXint#1#2#3{{\setbox0=\hbox{$#1{#2#3}{\int}$ }
\vcenter{\hbox{$#2#3$ }}\kern-.58\wd0}}

\newcommand{\divv}{{\text{{\rm div}}}}

%
%\newcommand{\avint}{\Xint} %average integral, thanks Jason A

%Linear Algebra and Optimization

%Theorems
\theoremstyle{plain}
\newtheorem{theorem}{Theorem}

\newtheorem{corollary}[theorem]{Corollary}

\newtheorem{lemma}[theorem]{Lemma}

\newtheorem{proposition}[theorem]{Proposition}

\theoremstyle{definition}

\newtheorem{definition}[theorem]{Definition}
\newtheorem{remark}[theorem]{Remark}

\numberwithin{equation}{section}
\numberwithin{theorem}{section}

\newcommand{\vv}{\vspace{2mm}}
\newcommand{\vvv}{\vspace{4mm}}

%\newcommand\eqn[1]{\ref{eq:#1}}

 %Pictures

%  To rob symbols from other packages, you need to do something like this below: find out which package it's in (in this case, mathabx), look in the corresponding dcl file for the symbol, copy that to the last line of the code below (has asterix below), and then replace 'math' with whatever the dcl file code is referring to
%   Also see http://tex.stackexchange.com/questions/14386/importing-a-single-symbol-from-a-different-font

  \DeclareFontFamily{U}{mathb}{\hyphenchar\font45} 
\DeclareFontShape{U}{mathb}{m}{n}{
      <5> <6> <7> <8> <9> <10> gen * mathb
      <10.95> mathb10 <12> <14.4> <17.28> <20.74> <24.88> mathb12
      }{}
\DeclareSymbolFont{mathb}{U}{mathb}{m}{n}

% Define a subset character from that font (from mathabx.dcl)
% to completely replace the \subset character, you can replace
% \varsubset with \subset

\DeclareMathSymbol{\toitself}      {3}{mathb}{"FD}  %*

\begin{document}

\title[Approximate tangets and harmonic measure]{Approximate tangents, harmonic measure, and domains with rectifiable boundaries}

\author{Mihalis Mourgoglou}
\address{Departamento de Matem\'aticas, Universidad del Pa\' is Vasco, Barrio Sarriena s/n 48940 Leioa, Spain and\\
Ikerbasque, Basque Foundation for Science, Bilbao, Spain.}
\email{michail.mourgoglou@ehu.eus}

\keywords{Rectifiability, approximate tangent planes, Lipschitz domains, harmonic measure}
\subjclass[2010]{31A15,  30C85,  42B37,  31B05,  28A75,  28A78, 49Q15}
\thanks{M.M. was supported  by IKERBASQUE and partially supported by the grant MTM-2017-82160-C2-2-P of the Ministerio de Econom\'ia y Competitividad (Spain), and by  IT-1247-19 (Basque Government). 
}
\maketitle

\begin{abstract}
Let $\Omega \subset \R^{n+1}$, $n \geq 1$, be an open and connected set.  Set $\mathcal{T}_n$ to be  the set of points $\xi \in  \d \om$   so that there exists an approximate tangent $n$-plane  for $\d\om$ at $\xi$ and $\d\om$ satisfies the weak lower Ahlfors-David $n$-regularity condition at $\xi$. We first show  that $\mathcal{T}_n$ can be covered by a countable union of  boundaries  of bounded Lipschitz domains. Then, letting $\d^\star \om$ be a subset  of $\mathcal{T}_n$ where $\om$ satisfies an appropriate thickness condition,  we  prove that $\d^\star \om$ can be covered by a countable union of boundaries of bounded Lipschitz domains  contained in $\om$. As a corollary we obtain that if $\om$ has locally finite perimeter, $\d\om$ is  weakly lower Ahlfors-David $n$-regular, and the measure-theoretic boundary  coincides with the topological boundary of $\om$ up to a set of $\HH^n$-measure zero, then $\partial \om$ can be covered, up to a set of $\HH^n$-measure zero, by a countable union of   boundaries of bounded Lipschitz domains that are contained in $\om$. This implies  that in such  domains, $\HH^n|_{\d\om}$ is absolutely continuous with respect to harmonic measure. 
\end{abstract}
\tableofcontents

\section{Introduction}\label{sec:intro}

There is a strong connection between  rectifiability of the boundary of a domain $\om$ in the Euclidean space $\R^{n+1}$ and  absolute continuity of harmonic measure defined in $\om$ with respect to the $n$-Hausdorff measure on $\d \om$. In the past few years there has been a renaissance of new results in this field shedding light on long-standing related questions. Let us now briefly review the history of work in this area.

%Recall that a set $E$ is $n$-rectifiable if it can be covered by a countable union of $n$-dimensional Lipschitz graphs up to a set of zero $n$-dimensional Hausdorff measure $\HH^{n}$. 

 As early as 1916, F. and M. Riesz \cite{RR} showed that for simply connected planar domains that are bounded by a Jordan curve and whose boundary has finite length, harmonic measure and arc-length are mutually absolutely continuous.  Lavrent'ev \cite{Lav} quantified this result by demonstrating that in a simply connected domain in the complex plane, bounded by a chord-arc curve,  harmonic measure is in the $A_\infty$ class of Muckehoupt weights. McMillan showed in \cite[Theorem 2]{McM69} that for bounded simply connected domains $\Omega\subset \mathbb C$, harmonic measure $\omega_\Omega$ and $\HH^1$ measure are mutually absolutely continuous  on the set of cone points. A local version of F. and M. Riesz theorem was obtained by Bishop and Jones \cite{BJ} where they showed that if $\Omega$ is a simply connected planar domain and $\Gamma$ is a curve of finite length, then $\omega\ll \HH^{1}$ on $\d\Omega\cap \Gamma$, where $\omega$ stands for the harmonic measure. They also constructed an infinitely connected planar domain $\Omega$ whose boundary $\d \om$ is uniformly rectifiable  but harmonic measure is not absolutely continuous to arc length (thus showing that some sort of connectivity  is required). 

In higher dimensions, the situation is more complicated. The obvious generalization of F. and M. Riesz theorem to higher dimensions is false due to examples of Wu and Ziemer: they constructed topological two-spheres in $\mathbb{R}^{3}$ with boundaries of finite Hausdorff measure $\HH^{2}$ where either harmonic measure is not absolutely continuous with respect to $\HH^{2}$ \cite{Wu} or $\HH^{2}$ is not absolutely continuous with respect to harmonic measure \cite{Z}. Dahlberg \cite{Da} proved that in a  Lipschitz domain,  harmonic measure is in the $A_\infty$ class of Muckenhoupt weights with respect to the $n$-Hausdorff measure restricted to the boundary. The same result was proved by David and Jerison in \cite{DJ90}  under the assumptions that $\Omega\subset \mathbb{R}^{n+1}$ is an NTA domain (for the definition see \cite{JK}) and $\d\Omega$ is Ahlfors-David regular. Azzam, Hofmann, Martell, Nystr\"om and Toro \cite{AHMNT} showed that any uniform domain with uniformly rectifiable boundary is an NTA domain and thus, $\omega \in A_\infty$ by \cite{DJ90} (a direct proof of the $A_\infty$-equivalence between $\omega$ and $\HH^n|_{\d \Omega}$ in this case was given earlier by Hofmann and Martell \cite{HM12}). For a local version of this result  by Azzam see \cite{Az1}. Badger \cite{Badger12} showed that if one merely assumes $\HH^{n}|_{\d\Omega}$ is locally finite and $\Omega\subset \mathbb{R}^{n+1}$ is NTA, then we still have $\HH^{n}|_{\d\Omega} \ll \omega$ (see also the work of Azzam in \cite{Az2}).  The author of the current manuscript obtained in \cite{Mo} a refinement of Badger's result by  proving that for a uniform domain of locally finite perimeter, with rectifiable boundary satisfying the lower ADR condition, surface measure is again absolutely continuous with respect to harmonic measure. Independently, Akman, Badger, Hofmann, and Martell \cite{ABHM15}  (among others) obtained the same result assuming  both upper and lower ADR. Later, Akman, Azzam, and the author \cite{AAM} studied absolute continuity of harmonic measure with respect to surface measure on domains $\Omega$ that have large complements and showed that if $\Gamma\subset  \mathbb R^{n+1}$ is Ahlfors-David $n$-regular and splits $ \mathbb R^{n+1}$ into two NTA domains, then $\omega_{\Omega}\ll \HH^{n}$ on $\Gamma\cap \partial\Omega$. This result is a natural generalization of a theorem of Wu in \cite{Wu}.

{
While this paper was in preparation, Akman, Bortz, Hofmann, and Martell \cite{ABHM16} proved, among others, a slightly more general version of Theorem \ref{thm:w-abscont-rect} under more general assumptions than the ones  we had originally  imposed using that rectifiable sets are $n$-linearly approximable (instead of having approximate tangents $\HH^n$-a.e.).  It is worth mentioning that their results are also local in nature. In addition, they constructed examples of domains to show that the results for harmonic measure were optimal. The present paper provides an alternative method to obtain a slightly weaker version of the  main theorems of  \cite{ABHM16}  and that is how it should be considered. 
}
\vv

Let us now state our results .
\begin{theorem}\label{thm:structure}
Let $E \subset \R^{n+1}$, $n \geq 1$ be a closed set and $s \in (0,1/3)$ be fixed. Set $\mathcal{T}_m(E) \subset E$ to be the set of all points $x \in E$ for which
\begin{enumerate}
\item  there exists an $s$-approximate tangent $m$-plane $V_x$ for $E$ at $x$ and 
\item  $E$ satisfies the weak lower Ahlfors-David $m$-regularity condition at $x$.
\end{enumerate}
Then there exists a countable collection of  bounded Lipschitz graphs  $\{\Gamma_j\}_{j\geq 1}$ so that $\K_m(E)  \subset \cup_{j\geq 1} \Gamma_j$. In particular, $\K_m(E) $ is $m$-rectifiable. 
\end{theorem}

{ For the definitions of  weak lower Ahlfors-David $m$-regularity and approximate tangents, we refer to Definitions \ref{def:WLADR} and  \ref{def:appr.tang.pl}. We emphasize that we did not assume that $\HH^m|_E$, the $m$-Hausdorff measure on $E$, is locally finite.}

\vv

\begin{theorem}\label{thm:Lip-dom}
Let $E \subset \R^{n+1}$, $n \geq 1$ be a closed set and $s \in (0,1/\sqrt{90})$ be fixed. If $\K_{n}(E) $ is as in Theorem \ref{thm:structure}, then there exist two countable collections of bounded Lipschitz domains $\{\om_j\}_{j \geq 1}$ such that  $\om_j^+ \cap \om_j^-= \emptyset$, $\K_{n}(E)  \cap \d \om^+_j =\K_{n}(E)  \cap \d \om^-_j$, and $\K_{n}(E)  \subset \cup_j \d \om^\pm_j $.
\end{theorem}

\vv

{
 Let $\Omega \subset \R^{n+1}$, $n \geq 1$, be an open set. Let $\xi \in \d\om$ and $\nu \in \mathbb{S}^n=\d B(0,1)$, and denote by
\begin{align*}
%H^-(\xi, \nu) &= \{ x \in \R^d: (x-\xi) \cdot \nu < 0\}\\
  H^+(\xi, \nu) &= \{ x \in \R^d: (x-\xi) \cdot \nu > 0\},
\end{align*}
 the upper half-space defined by $\nu$. If $V_\xi $ is an approximate tangent $n$-plane for $\d \om$ at $\xi  \in \d\om$, we define by $\nu_\xi ^+,\nu_\xi ^-  \in \mathbb{S}^{n}$ to be  the two  vectors in the unit sphere  that are orthogonal to $V_\xi $.

\vv 
 
 \begin{definition}\label{def:thick}
 We say that $\xi  \in \mathcal \d^\star \om$  if $\xi \in \K_n(\d\om)$ and either 
\begin{equation}\label{eq:thick1}
\liminf_{r \to 0} \frac{|B(\xi,r) \cap H^+(\xi, \nu_\xi^+) \setminus \Omega |}{|B(\xi,r)|}=0,
\end{equation}
or
\begin{equation}\label{eq:thick2}
 \liminf_{r \to 0} \frac{|B(\xi,r) \cap H^+(\xi, \nu_\xi^-) \setminus \Omega |}{|B(\xi,r)|}=0.
\end{equation}
\end{definition}
Here $| \cdot|$ stands for the $(n+1)$-dimensional Lebesgue measure.

\vv

\begin{theorem}\label{thm:Lip-dom-int}
 Let $\Omega \subset \R^{n+1}$, $n \geq 1$, be an open and connected set and fix $s \in (0,1/\sqrt{90})$. If  $\d^\star \om$ is as in Definition \ref{def:thick}, then there exists a countable collection of bounded Lipschitz domains $\{\om_j\}_{j \geq 1}$ such that  $ \om_j \subset \om $ and $ \d^\star \om\subset \cup_j \d \om_j $. As a consequence we obtain that  $\HH^n \ll \hm_{\om}^x$ on $\d^\star\om$, for any $x \in \om$, where $\hm^x_{\om}$ stands for the harmonic measure for $\om$ with pole at $x \in \om$.
\end{theorem}

\vv

An immediate consequence of Theorem \ref{thm:Lip-dom-int} is the following:
\begin{theorem}\label{thm:w-abscont-rect}
Let $\Omega \subset \R^{n+1}$, $n \geq 1$, be an open and connected set with $n$-rectifiable and weakly lower Ahlfors-David $n$-regular boundary $\d \om$. Let also  $\HH^{n}|_{\d \om}$ be locally finite and assume that
$$\HH^{n}(\d \om \setminus \d^\star \om)=0,$$ 
where $\d^\star \om$ is as in Definition \ref{def:thick}. Then, there exists a countable collection of bounded Lipschitz domains $\{\om_j\}_{j \geq 1}$ contained in $\om$ such that $\d\om \subset  \cup_j \d \om_j \cup N$, for some $N \subset \d \om$ with $\HH^{n}(N)=0$. Moreover,  $\HH^n|_{\d \om} \ll \hm^x_{\om}$  for any $x \in \om$.
\end{theorem}

\vv

Finally,  by Theorem \ref{thm:w-abscont-rect}, we obtain the following corollary.
\begin{corollary}\label{cor:w-abscont-rect}
Let $\Omega \subset \R^{n+1}$, $n \geq 1$, be an open and connected set of locally finite perimeter. Let also $\d\om$ be weakly lower Ahlfors-David $n$-regular and $\HH^{n}( \d \om \setminus \d_* \om)=0,$ where $\d_* \om$ stands for the measure-theoretic boundary of $\om$. Then, there exists a countable collection of bounded Lipschitz domains $\{\om_j\}_{j \geq 1}$ contained in $\om$ such that $\d\om \subset  \cup_j \d \om_j \cup N$, for some $N \subset \d \om$ with $\HH^{n}(N)=0$. Moreover,  $\HH^n|_{\d \om} \ll \hm^x_{\om}$  for any $x \in \om$.
\end{corollary}

}

\vvv

\subsection*{Acknowledgements} We warmly thank J. Azzam for his encouragement and several conversations pertaining to this work.

\vvv

\section{Background material}\label{sec:backgr}
\begin{itemize}
\item If $A,B\subset \R^{d}$, we let \[\dist(A,B)=\inf\{|x-y|:x\in A,y\in B\}, \;\; \dist(x,A)=\dist(\{x\},A),\]
\item $B(x,r)$ stands for the open ball of radius $r$ which is centered at $x$. We also denote by $\lambda B(x,r)=B(x,\lambda r)$.
\item  We will write $p \lesssim q$ if there is $C>0$ so that $p \leq C q$ and $p \lesssim_{M} q$ if the constant $C$ depends on the parameter $M$. We write $p \sim q$ to mean $p\lesssim q \lesssim p$ and define $p\sim_{M} q$ similarly. 
\item $G(d,m)$ is the Grassmannian manifold of all $m$-dimensional linear subspaces of $\R^d$.
\item We denote by $\pi_V: \R^d \to V$ the orthogonal projection on $V \in G(d, m)$.
\item $V^\perp \in G(d, d-m)$ is the orthogonal complement of $V \in G(d, m)$.
\item $f:E \subset \R^d \to \R^d$ is $L$-Lipschitz if  for all $x, y \in E$,
$$|f(x)-f(y)| \leq L |x-y|.$$
\item  $f:E\subset \R^d \to \R^d$ is  called $L$-bi-Lipschitz if for all $x, y \in E$,
$$L^{-1} |x-y| \leq |f(x)-f(y)| \leq L |x-y|.$$
\end{itemize}

\vvv

Let us recall now some elements of geometric measure theory following \cite{Mattila}. For $A\subset \R^{d}$, $s \in (0, d]$, { and $\delta \in (0, \infty]$,} we set
\[\HH^{s}_{\delta}(A)=\inf\left\{\sum (\diam A_i)^{s}: A\subset \bigcup_i A_i, \,\, \diam A_i<\delta \right\}.\]
Define the {\it $s$-Hausdorff measure} as $\HH^{s}(A)=\lim_{\delta\downarrow 0}\HH^{s}_{\delta}(A).$
%and the {\it $s$-dimensional Hausdorff content} as $\HH^{s}_{\infty}(A)$. 

\vv

\begin{definition}
Let $0< s<\infty$, $E \subset \R^d$ and $x \in \R^d$. The {\it upper  $s$-density} of $E$ at $x$ is defined by
\begin{align*}
\Theta^{*,s}(E, x) &= \limsup_{r \to 0} \frac{\HH^s(E \cap B(x,r))}{r^s}. 
%\Theta_*(E, x) &= \liminf_{r \to 0} \frac{\HH^s(E \cap B(x,r))}{r^s}.
\end{align*}
\end{definition}

\vv

{ Following \cite{ABHM16}, we give the following definition:}
\begin{definition}\label{def:WLADR}
We say that a set $E \subset \R^{d}$ satisfies the {\it weak lower Ahlfors-David $s$-regularity} condition ({WLADR}) at $x \in E$ if there exists $\rho_x>0$ such that
\begin{equation}
\inf_{(y, r) \in B(x,r) \times (0, \rho_x)} \frac{\HH^{s}(B(y,r) \cap E)}{r^{s}} >0.
\label{eq:w-regular}
\end{equation}
We will say that $E$ is WLADR  if $E$ is WLADR at $x$ for $\HH^s$-a.e. $x \in E$.
\end{definition}

\vv

\begin{definition}
 If $V \in G(d,m)$, $\xi \in \R^d$, and $s \in (0,1)$, we say that the set
\begin{align}\label{eq:tang.cone}
X(\xi, V, s) &= \{ x \in \R^d : \dist(x-\xi, V) < s|x - \xi| \} \notag\\
&= \{x\in \R^d:| \pi_{V^\perp}(x-\xi)|< s|x-\xi|\}
\end{align}
is a \textit{cone} around $\xi+V$ with vertex $\xi$ and aperture $s$.  For  $0<r<\infty$ we also set  
\begin{equation}\label{eq:tr.tang.cone}
X(\xi, V, s, r)=X(\xi, V, s) \cap B(\xi, r).
\end{equation}
to be the truncated cone at height $r$.

\vv

We can alternatively define the two-sided cone by
$$X(\xi, \nu, \alpha)= \{ x \in \R^d: | (x-\xi) \cdot \nu | > \alpha|x- \xi|\},$$
where $\alpha>0$ and $\nu \in \mathbb S^{d-1}:=\d B(0,1)$, while the one-sided cones are given by
\begin{align*}
X^+(\xi, \nu, \alpha) &= \{ x \in \R^d: (x-\xi) \cdot \nu> \alpha|x- \xi|\}, \\
X^-(\xi, \nu, \alpha) &= \{ x \in \R^d: (x-\xi) \cdot \nu < - \alpha|x- \xi|\} .
\end{align*}
We also denote the half-spaces that contain $\xi$ by
\begin{align*}
H^+(\xi, \nu) &= \{ x \in \R^d: (x-\xi) \cdot \nu > 0\}, \\
H^-(\xi, \nu) &= \{ x \in \R^d: (x-\xi) \cdot \nu < 0\},
\end{align*}
and notice that $X^\pm(\xi, \nu, \alpha)  \subset H^\pm(\xi, \nu)$. 

\end{definition}

\vv
{
\begin{remark}\label{rem:compl.cone}
Remark that ${^cX(\xi, V, s)}$, the complement ${X(\xi, V, s)}$, is actually the closure of the cone $X(\xi, V^\perp, \sqrt{1-s^2})$. In the case $V=\R^{d-1}$, we have  $\nu= \vec{e}_{n+1}= (0,0, \dots, 0,1)$ and $X(\xi, \R, \sqrt{1-s^2}) = X(\xi,  \vec{e}_{n+1}, s)$.
\end{remark}
 }
 \vv
 
\begin{definition}\label{def:appr.tang.pl}
Let $E \subset \R^d$, $\xi \in \R^d$, and $V \in G(d,m)$. For fixed $s\in (0, 1)$, we say that $V$ is an {\it $s$-approximate tangent $m$-plane} for $E$ at $\xi$ if $\Theta^{*, m}(E, \xi)>0$ and 
\begin{equation}\label{eq:cone.dens.0}
\lim_{r \to 0} \frac{\HH^m(E \cap B(\xi, r) \setminus X(\xi, V, s))}{r^m}=0.
\end{equation}
If this holds for all $s \in (0,1)$ then we just say that $V$ is an {\it approximate tangent $m$-plane} for $E$ at $\xi$. We write $\textup{ap-Tan}^m(E,\xi)$ for the set of all approximate tangent $m$-planes for $E$ at $\xi$.
\end{definition}

\vv

\begin{definition}\label{def:rec-lip}
If $E\subseteq \R^{d}$ is a Borel set, we say that $E$ is {\it $m$-rectifiable} if $\HH^m(E\backslash \bigcup_{i=1}^{\infty} \Gamma_{i})=0$ where $\Gamma_{i}=f_{i}(E_{i})$, $E_{i}\subseteq \R^{m}$, and $f_{i}:E_{i}\rightarrow \R^{d}$ is Lipschitz. 
\end{definition}

\vv

The criterion for rectifiability which will be most useful for us is the following:
\begin{theorem}{\cite[Theorem 15.19]{Mattila}}\label{def:rec-approx}
Let $E \subset \R^d$ be  $\HH^m$-measurable and $\HH^m|_E$ be locally finite. Then the following are equivalent:
\begin{enumerate}
\item $E$ is $m$-rectifiable.
\item For $\HH^m$ almost every point $\xi \in E$, there is a unique approximate tangent $m$-plane for $E$ at $\xi$.
\item  For $\HH^m$ almost every point $\xi \in E$, there is some approximate tangent $m$-plane for $E$ at $\xi$.
\end{enumerate}
\end{theorem}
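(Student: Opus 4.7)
The plan is to prove the three implications separately, with (2) $\Rightarrow$ (3) being immediate from the definition. For (1) $\Rightarrow$ (2), I would use the Lipschitz decomposition $E = F \cup \bigcup_i \Gamma_i$ with $\HH^m(F) = 0$ provided by Definition \ref{def:rec-lip}. By Rademacher's theorem applied to Lipschitz extensions of the $f_i$, each $\Gamma_i$ admits a classical tangent $m$-plane $V_\xi$ at $\HH^m$-a.e.\ $\xi$. Standard density computations for rectifiable sets give $\Theta^m(\Gamma_i,\xi) = \Theta^m(E, \xi) > 0$ at $\HH^m$-a.e.\ $\xi \in \Gamma_i$, because the measure-theoretic overlap of $\Gamma_i$ with $\bigcup_{j \neq i} \Gamma_j$ has zero upper density at such $\xi$. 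The intersection $B(\xi,r)\setminus X(\xi, V_\xi, s)$ with $\Gamma_i$ is $o(r^m)$ by the classical tangency, while its intersection with $E \setminus \Gamma_i$ is also $o(r^m)$ by the vanishing density, so $V_\xi$ is an $s$-approximate tangent $m$-plane for $E$ for every $s \in (0,1)$. Uniqueness follows because two distinct candidates $V \neq V'$ would, for $s$ sufficiently small, force the bulk of the mass of $E$ near $\xi$ to lie in $X(\xi, V, s) \cap X(\xi, V', s) \cap B(\xi, r)$, a set whose $\HH^m$-content is $o(r^m)$, contradicting $\Theta^{*,m}(E,\xi) > 0$.

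For the nontrivial direction (3) $\Rightarrow$ (1), let $E' = \{\xi \in E : \textup{ap-Tan}^m(E,\xi) \neq \emptyset\}$ and pick a Borel-measurable selection $\xi \mapsto V_\xi$. Fix a countable net $\{W_k\}$ dense in $G(d,m)$, and for each triple $(k,n,\ell)$ define $E_{k,n,\ell} \subset E'$ to be the set of $\xi$ at which $V_\xi$ is within Grassmannian distance $1/n$ of $W_k$, $\Theta^{*,m}(E,\xi) \geq 1/n$, and the convergence in \eqref{eq:cone.dens.0} is uniform in the sense that $\HH^m(E \cap B(\xi, r) \setminus X(\xi, V_\xi, 1/n)) \leq r^m/\ell$ for all $r \leq 1/\ell$. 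Egorov's theorem ensures that these pieces exhaust $E'$ modulo a null set. On each piece I would establish a pointwise cone condition: for $x, y \in E_{k,n,\ell}$ with $|x-y|$ sufficiently small, the point $y$ must lie in $X(x, W_k, 3/n)$. Indeed, if $y$ violated this, then $B(y, c|x-y|)$ would be disjoint from $X(x, W_k, 1/n)$ yet would carry $\HH^m$-mass bounded below by a uniform multiple of $|x-y|^m$ thanks to the lower density at $y$, violating the uniform upper bound at $x$. This pointwise cone condition forces $E_{k,n,\ell}$ to be locally a Lipschitz graph over $W_k$ with Lipschitz constant controlled by $3/n$, yielding a countable Lipschitz-graph cover of $E$ modulo a null set, hence $m$-rectifiability.

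The main obstacle is precisely the measure-to-pointwise upgrade of the cone condition: the definition of approximate tangency constrains only the $\HH^m$-mass outside the cone, not individual points of $E$, and the passage requires the positive lower density to propagate \emph{uniformly} to nearby points of the stratum. This is why both the Egorov stratification (for uniform rate in the cone density) and the density threshold $\Theta^{*,m} \geq 1/n$ are indispensable, and why local finiteness of $\HH^m\vert_E$ enters crucially. It is worth noting that Theorem \ref{thm:structure} of the present paper performs an analogous cone-to-Lipschitz-graph conversion under the weaker weak lower Ahlfors--David regularity hypothesis, and can be viewed as the sharper mechanism that makes the corollary immediately after it work without assuming local finiteness at the outset.
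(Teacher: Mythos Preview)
The paper does not supply a proof of this statement: it is quoted verbatim from \cite[Theorem~15.19]{Mattila} as background, so there is no ``paper's own proof'' to compare your proposal against.

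On the merits of your sketch: the implications $(2)\Rightarrow(3)$ and $(1)\Rightarrow(2)$ are handled correctly. In $(3)\Rightarrow(1)$, however, there is a genuine gap. You stratify $E'$ by the condition $\Theta^{*,m}(E,\xi)\geq 1/n$, which is an \emph{upper} density bound, but in the contradiction step you write that $B(y,c|x-y|)$ ``would carry $\HH^m$-mass bounded below by a uniform multiple of $|x-y|^m$ thanks to the lower density at $y$''. Positive upper density only guarantees such a mass bound along a \emph{sequence} of radii tending to $0$, not at the particular scale $c|x-y|$ you need; and since Definition~\ref{def:appr.tang.pl} requires only $\Theta^{*,m}(E,\xi)>0$, you cannot strengthen the stratification to a lower-density threshold without potentially discarding a set of positive $\HH^m$-measure. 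The Egorov step controls the \emph{smallness} of mass outside the cone uniformly, but gives you nothing for the \emph{largeness} of mass inside small balls at every scale.

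Your closing remark actually diagnoses the difficulty precisely: Theorem~\ref{thm:structure} carries out exactly this cone-to-Lipschitz-graph conversion, and it succeeds because the WLADR hypothesis \eqref{eq:w-regular} supplies a uniform lower mass bound at \emph{all} small scales near $\xi$, which is what your argument is missing in the general case. Mattila's own proof of $(3)\Rightarrow(1)$ circumvents the issue entirely by passing through the rectifiable/purely-unrectifiable decomposition and showing that a purely $m$-unrectifiable set of finite $\HH^m$-measure admits no approximate tangent $m$-plane at $\HH^m$-a.e.\ point; that argument never needs a pointwise lower density bound.
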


\vv

\begin{definition}\label{def:BV}
A function $f \in L_{loc}^1(U)$ has {\it locally bounded variation} in an open set $U \subset \R^{n+1}$ and we write $f \in BV_{\loc}(U)$, if for each open set $V\Subset U$,
$$\sup \left\{ \int_V f \,\,\divv \phi \,\,d\mathcal L^{n+1}: \phi \in C^\infty_c(V;\R^{n+1}),\,\, |\phi| \leq 1 \right\} <\infty,$$
where $\mathcal L^{n+1}$ stands for the $(n+1)$-dimensional Lebesgue measure. An $\mathcal L^{n+1}$-measurable set $A \subset \R^{n+1}$ has {\it locally finite perimeter} in $U$ if $\chi_A \in BV_{loc}(U)$.
\end{definition}

\vv

\begin{theorem}
If $A \subset \R^{n+1}$ has {\it locally finite perimeter} in $U$, then there exists a Radon measure $\|\d A\|$ on $U$ and $\nu_A:U \to \R^{n+1}$ so that 
\begin{itemize}
\item[(i)] $|\nu_A(x)|=1$, for $\|\d A\|$-a.e. $x \in U$ and
\item[(ii)] $\int_A \divv \phi = \int \phi \cdot \nu_A \,d\,\|\d A\|$,
\end{itemize}
for all $\phi \in C^1_c(U; \R^{n+1})$.
\end{theorem}

\vv

\begin{definition}\label{def:reduced bdry}
Let $A$ be a set of locally finite perimeter in $\R^{n+1}$ and $x\in \R^{n+1}$. We say that $x \in \d^* A$, the {\it reduced boundary} of $A$, if
\begin{enumerate}
\item  $\|\d A\|(B(x,r))>0$, for all $r>0$,
\item $\lim_{r \to 0} \frac{1}{\|\d A\|(B(x,r))}\int_{B(x,r)}  \nu_A(y) \,d\|\d A\| = \nu_A(x)$, and
\item $|\nu_A(x)|=1$.
\end{enumerate}
\end{definition}

\vv

\begin{definition}\label{def:2.11}
For each $x \in \d^* A$ we define the {\it hyperplane}
$$H_A(x)=\left\{ y \in \R^{n+1}: \nu_A(x) \cdot (y-x) =0  \right\}$$
and the {\it half-spaces}
\begin{align*}
H_A^+(x)&=\left\{ y \in \R^{n+1}: \nu_A(x) \cdot (y-x) \geq 0  \right\},\\
H_A^-(x)&=\left\{ y \in \R^{n+1}: \nu_A(x) \cdot (y-x) \leq 0  \right\}.
\end{align*}
A unit  vector $\nu_A(x)$ is called the {\it measure theoretic unit outer normal} to $A$ at $x$ if
$$
\lim_{r \to 0} \frac{\mathcal L^{n+1} ( B(x,r) \cap A \cap H_A^+(x))}{r^{n+1}}=0
$$
and
$$
\lim_{r \to 0} \frac{\mathcal L^{n+1} ( (B(x,r) \setminus A) \cap H_A^-(x))}{r^{n+1}}=0.
$$
\end{definition}

\vv

\begin{definition}
Let $x \in \R^{n+1}$. We say that $x \in \d_*A$, the {\it measure theoretic boundary} of $A$, if 
$$
\limsup_{r \to 0} \frac{\mathcal L^{n+1} ( B(x,r) \cap A)}{r^{n+1}}>0
$$
and
$$
\limsup_{r \to 0} \frac{\mathcal L^{n+1} ( B(x,r) \setminus A)}{r^{n+1}}>0.
$$
\end{definition}

\vv
For the proof of the next theorem see  \cite[Theorem 2, p. 205]{EvansGariepy} and \cite[Lemma 1, p. 208]{EvansGariepy}..
\begin{theorem}
\label{thm:mt-reduced-bry}
If  $A \subset \R^{n+1}$ has locally finite perimeter, then the following hold: 
\begin{enumerate}
\item $\d^* A=\cup_{j\geq 1} K_j \cup N$, where $\|\d A\|(N)=0$ and $K_j$ is a compact subset of a $C^1$-hypersurface $S_j$.
\item  $\nu_A|_{S_j}$ is outer unit normal to $S_j$.
\item $\|\d A\|= \HH^n|_{\d^* A}$.
\item $\d^* A \subset \d_* A$ and $\HH^n(\d_* A \setminus \d^* A)=0$.
\item If $\HH^n(\d A \setminus \d_* A)=0$, then $\d A$ is $n$-rectifiable. 
\end{enumerate}
\end{theorem}
\vv

A useful criterion that allows us to determine whether a set has  locally finite perimeter, whose proof can be found in \cite[p.\ 222]{EvansGariepy}, is the following:
\begin{theorem}\label{thm:BV-Criterion}
If $A \subset \R^{n+1}$ is $\mathcal L^{n+1}$--measurable, then it has locally finite perimeter if and only if $\HH^n(K \cap \d_* A)<\infty$, for each compact set $K \subset \R^{n+1}$.
\end{theorem}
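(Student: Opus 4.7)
The plan is to prove the two implications separately. The forward direction is immediate from the preceding remark: if $E$ has locally finite perimeter, then $\|\partial E\|$ is a Radon measure, and the De Giorgi--Federer identification $\|\partial E\|=\HH^n|_{\partial^*E}$ yields $\HH^n(K\cap\partial^*E)=\|\partial E\|(K)<\infty$ for every compact $K\subset\R^{n+1}$. Combined with $\HH^n(\partial_*E\setminus\partial^*E)=0$, this gives the desired finiteness on $\partial_*E$.

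For the converse, fix open sets $V\Subset V'\Subset U$ and set $M:=\HH^n(\overline{V'}\cap\partial_*E)$, which is finite by hypothesis. We must bound
$$T(\phi):=\int_V \chi_E\,\divv\phi\,d\mathcal{L}^{n+1}$$
uniformly over $\phi\in C^\infty_c(V;\R^{n+1})$ with $|\phi|\le 1$. The natural strategy is to slice by lines parallel to the coordinate axes: writing $\phi=\sum_i \phi_i e_i$, Fubini gives
$$\int_V \chi_E\,\partial_i\phi_i\,d\mathcal{L}^{n+1}=\int_{e_i^{\perp}}\left(\int_\R \chi_E(y+se_i)\,\partial_s\phi_i(y+se_i)\,ds\right)d\HH^n(y).$$
For $\HH^n$-a.e.\ $y\in e_i^{\perp}$, the one-variable function $s\mapsto \chi_E(y+se_i)$ is of bounded variation on any compact interval, with distributional derivative supported on the slice $(y+\R e_i)\cap\partial_*E$. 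Integration by parts along the line then bounds the inner integral by $\|\phi\|_\infty$ times the number of boundary crossings inside $\supp\phi$, and a standard projection estimate (orthogonal projections do not increase $\HH^n$) yields
$$\int_{e_i^{\perp}}\#\bigl(\partial_*E\cap(y+\R e_i)\cap\supp\phi\bigr)\,d\HH^n(y)\le C\,\HH^n(\partial_*E\cap\overline{V'})=CM.$$
Summing over $i=1,\dots,n+1$ produces $|T(\phi)|\le C(n)M$ uniformly in $\phi$, so $\chi_E\in BV_{\loc}(U)$.

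The main obstacle is justifying the slice-wise bounded-variation assertion: one must verify that for $\HH^n$-a.e.\ $y$ the one-dimensional measure-theoretic boundary of $\{s\in\R: y+se_i\in E\}$ is finite on every compact interval and is contained in $\partial_*E$ via the natural inclusion $s\mapsto y+se_i$. This is a Fubini-type statement matching the density definition of $\partial_*E$ along lines, and it requires comparing one-dimensional densities of the slices to the full $(n+1)$-dimensional densities defining $\partial_*E$; the finiteness of $M$ supplies the integrability needed to run the argument. Once this slicing framework is in place, the remainder of the proof is a direct application of integration by parts and the projection inequality, with no further measure-theoretic subtleties.
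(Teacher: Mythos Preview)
The paper does not supply its own proof of this theorem; it simply records the statement and cites \cite[p.~222]{EvansGariepy} for the argument. So there is no paper-internal proof to compare against. Your outline is, in fact, the strategy carried out in Evans--Gariepy: the forward direction is immediate from the structure theorem $\|\partial E\|=\HH^n|_{\partial^*E}$ together with $\HH^n(\partial_*E\setminus\partial^*E)=0$; the converse proceeds by slicing along coordinate directions, bounding the essential variation of each one-dimensional slice $s\mapsto\chi_E(y+se_i)$ by $\#\bigl(\partial_*E\cap(y+\R e_i)\cap K\bigr)$, and then integrating over $y\in e_i^\perp$.

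One correction is warranted. The inequality you write,
\[
\int_{e_i^{\perp}}\#\bigl(\partial_*E\cap(y+\R e_i)\cap\supp\phi\bigr)\,d\HH^n(y)\le C\,\HH^n(\partial_*E\cap\overline{V'}),
\]
is \emph{not} the statement that ``orthogonal projections do not increase $\HH^n$''; that assertion only controls $\HH^n\bigl(\pi_{e_i^\perp}(\partial_*E\cap\overline{V'})\bigr)$, with no multiplicity. What you actually need here is Eilenberg's inequality (an integral-geometric bound), which controls the integral of the fibre counting function $\HH^0$ by $\HH^n$ of the total set; Evans--Gariepy prove exactly this via a Vitali covering argument before running the slicing. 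With that amendment, and with the slice-boundary inclusion lemma you correctly flag as the crux (that for $\HH^n$-a.e.\ $y$ the one-dimensional measure-theoretic boundary of $E_y$ embeds into $\partial_*E$), your proposal matches the cited source.
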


\vv

The results above in combination with  Theorem 5.6.5 and Lemma 5.5.4  in \cite{Z89} give the following proposition. 
\begin{proposition}\label{prop:locfinapptang}
If  $A \subset \R^{n+1}$ has locally finite perimeter and $\HH^n(\d A \setminus \d_* A)=0$, then $\HH^n$-a.e. $x \in \d A$ has a unique approximate tangent plane that coincides with $H_A(x)$ from Definition \ref{def:2.11}.
\end{proposition}

\vvv

\section{Proof of  Theorems}

We will first prove Theorem \ref{thm:structure}. By hypothesis, for a fixed $s \in (0,1)$ and $\xi \in \K:=\K_{m}(E)$ so that \eqref{eq:w-regular} holds,  there exists an $m$-plane $V_\xi$ passing through the origin so that 
\begin{equation}\label{eq:con.dens.zero}
\lim_{r \to 0} \frac{\HH^m(E \cap B(\xi, r) \cap {^cX}(\xi, V_\xi, s))}{r^m}=0.
\end{equation}
 %Therefore, for every $\ve>0$, there exists $r_\xi>0$, so that for every $0<r<r_\xi$, we have that $\HH^m(\d \Omega \cap\Gamma(\xi, V, s, r))< \ve r^m$.\\
\vv

\begin{lemma}
There exists $r_\xi>0$ so that $^{c}X(\xi, V_\xi, 2s) \cap B(\xi, r_\xi) \cap E = \{\xi\} $.
\end{lemma}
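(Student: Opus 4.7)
The plan is to argue by contradiction: if no such $r_\xi$ existed, one could find points of $E$ arbitrarily close to $\xi$ lying outside the wider cone $X(\xi, V_\xi, 2s)$. Around each such point, the weak lower Ahlfors-David $m$-regularity at $\xi$ would produce a ball carrying a definite amount of $\HH^m$-mass which, for purely cone-geometric reasons, sits inside the narrower complementary cone ${^c}X(\xi, V_\xi, s)$, in direct conflict with the approximate tangent condition \eqref{eq:con.dens.zero}.

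More concretely, I would assume there is a sequence $x_k \in E \cap {^c}X(\xi, V_\xi, 2s) \setminus \{\xi\}$ with $\rho_k := |x_k - \xi| \to 0$, and fix a constant $c = c(s) \in (0, s/(1+s))$; the defining inequality $c(1+s) < s$ rearranges into $(2s - c) > s(1+c)$. For any $y \in B(x_k, c\rho_k)$ one then checks, using the reverse triangle inequality for $\dist(\cdot, V_\xi)$ and the fact that $x_k \in {^c}X(\xi, V_\xi, 2s)$, that
\[
\dist(y - \xi, V_\xi) \;\geq\; \dist(x_k - \xi, V_\xi) - |y - x_k| \;\geq\; (2s - c)\rho_k \;>\; s(1+c)\rho_k \;\geq\; s|y - \xi|,
\]
so that $B(x_k, c\rho_k) \subset {^c}X(\xi, V_\xi, s)$; trivially also $B(x_k, c\rho_k) \subset B(\xi, (1+c)\rho_k)$.

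Next I would invoke the weak lower Ahlfors-David condition \eqref{eq:w-regular} at $\xi$: for every $k$ large enough that $x_k \in B(\xi, \rho_\xi)$ and $c\rho_k < \rho_\xi$, there exists a constant $\kappa = \kappa(\xi) > 0$, independent of $k$, such that $\HH^m(E \cap B(x_k, c\rho_k)) \geq \kappa (c\rho_k)^m$. Setting $R_k := (1+c)\rho_k \to 0$, the two inclusions above combined with this density lower bound yield
\[
\frac{\HH^m\bigl(E \cap B(\xi, R_k) \cap {^c}X(\xi, V_\xi, s)\bigr)}{R_k^m} \;\geq\; \frac{\kappa\, c^m}{(1+c)^m} \;>\; 0,
\]
flatly contradicting \eqref{eq:con.dens.zero}.

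The only real obstacle is the cone-arithmetic bookkeeping in the first display: one must ensure that a constant $c$ depending only on $s$ can be chosen small enough for the entire ball $B(x_k, c\rho_k)$ to lie inside the narrower complementary cone ${^c}X(\xi, V_\xi, s)$. The factor-of-two gap between the apertures $2s$ (in the statement of the lemma) and $s$ (in the approximate tangent condition) is precisely what makes such a choice of $c$ available; with equal apertures the argument would collapse.
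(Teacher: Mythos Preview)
Your proof is correct and follows essentially the same approach as the paper: argue by contradiction, show that a small ball around each offending point $x_k$ sits inside ${^c}X(\xi,V_\xi,s)\cap B(\xi,Cr)$ by elementary cone arithmetic exploiting the $2s$-versus-$s$ gap, and then invoke the WLADR condition at $\xi$ to contradict \eqref{eq:con.dens.zero}. Your parametrization via $\rho_k = |x_k-\xi|$ is in fact slightly cleaner than the paper's, which introduces an auxiliary constant $c_0$ with $c_0 r_i \leq |x_i-\xi| < r_i$ before carrying out the same computation.
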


\begin{proof}
Let $\rho_\xi$ be the radius from the definition of weak Ahlfors-David regularity at the point $\xi$. Let us assume that we can find a sequence of radii $r_i \to 0$ with $r_i < \rho_\xi$ so that for each $i\geq 1$, there exists $x_i \in {^{c}X}(\xi, V_\xi, 2s) \cap B(\xi, r_i) \cap E$. We may choose $r_i$ so that $c_0 r_i \leq |x_i-\xi| < r_i$, for a constant $c_0 \sim 1$ to be fixed momentarily. Moreover, we can find a constant $\delta \in (0,1)$ such that $\delta \sim_{s} 1$ and
\begin{equation} \label{eq:Bdelta-cone}
B(x_i, \delta r_i) \subset {^cX}(\xi, V_\xi, s) \cap B(\xi, 2 r_i).
\end{equation} 
Indeed, since $\pi_{V_\xi^\perp}$ is a linear 1-Lipschitz map, for any $y\in B(x_i, \delta r_i)$, it holds 
\begin{align*}
|\pi_{V_\xi^\perp} (y - \xi)| &= |\pi_{V_\xi^\perp} (x_i- \xi) + \pi_{V_\xi^\perp} (y - x_i) | \geq \left| 2s |x_i- \xi| -  |y - x_i| \right|\\
& \geq 2  s c_0\, r_i - \delta r_i \geq \frac{2 c_0 s - \delta}{1+\delta} |y -\xi| = s |y -\xi|,
\end{align*}
if we choose $\delta = \frac{(2c_0-1)s}{1+s}$ and $c_0 \in (1/2,1)$ so that $\delta \sim 1$. The fact that $B(x_i, \delta r_i) \subset B(\xi, 2r_i)$ is trivial. By \eqref{eq:Bdelta-cone} and \eqref{eq:con.dens.zero}, we have that
\begin{align*}
\inf_{(y, r) \in B(\xi,r) \times (0, \rho_\xi)} &\frac{\HH^{m}(E \cap B(y,r))}{r^{m}} \leq \lim_{r_i \to 0} \frac{\HH^m(E \cap B(x_i, \delta r_i))}{r_i^m} \\
\leq &\lim_{r_i \to 0}\frac{\HH^m( E \cap {^cX}(\xi, V_\xi, s) \cap B(\xi, 2 r_i) )}{r_i^m} = 0,
\end{align*}
which by the weak lower Ahlfors-David $m$-regularity of $E$ is a contradiction. This concludes our lemma.
\end{proof}

\vv

For $V, W \in G(n+1, n+1-m)$, we define $d(V, W)= \| \pi_V - \pi_W\|$, where $\|\cdot \|$ is the usual operator norm for linear maps. With this metric $G(n+1, n+1-m)$ is a compact metric space and thus, for any fixed number $\alpha \in (0, 1/3)$, there is a finite subset of $G(n+1, n+1-m)$, say $\P_m(\alpha)=\{V_j\}_{j =1}^{N(\alpha)}$,  such that the following holds: for any  $V  \in G(n+1, n+1-m)$, there exists $V_{j_0} \in \P_m$ so that $d(V, V_{j_0})<\alpha$.

\vv

\begin{lemma}\label{lem:3.2}
Assume that $\ve>0$. For any $\xi \in \K$, there exists $j=j(\xi, \ve)\in \bN$, such that $V_j \in \P_m$ and ${^cX}(\xi, V_j^\perp, 2s+\ve) \subset {^cX}(\xi, V_\xi, 2s)$. 
\end{lemma}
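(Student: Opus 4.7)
The plan is to convert the cone-complement inclusion into a projection inequality and then close the gap via the triangle inequality for orthogonal projections in operator norm. The finite net on $G(d,d-m)$ lets us replace the unknown subspace $V_\xi^\perp$ by one of finitely many $V_j$, and the $\ve$-slack in the aperture is there precisely to absorb the approximation error.

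First I would select $V_j\in\P_m$ approximating $V_\xi^\perp$: applying the net property at a sufficiently fine scale (see the obstacle below), fix $V_j$ with
\[
d(V_j,V_\xi^\perp)=\|\pi_{V_j}-\pi_{V_\xi^\perp}\|<\ve.
\]
Next I would rewrite each cone complement as a projection inequality, using the identity $\dist(y,W)=|\pi_{W^\perp}(y)|$ for any linear subspace $W$: membership of $x$ in ${^cX}(\xi,V_j^\perp,2s+\ve)$ is equivalent to $|\pi_{V_j}(x-\xi)|\geq(2s+\ve)|x-\xi|$, while membership of $x$ in ${^cX}(\xi,V_\xi,2s)$ is equivalent to $|\pi_{V_\xi^\perp}(x-\xi)|\geq 2s|x-\xi|$. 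Finally, for an arbitrary $x$ in the smaller complement, the triangle inequality gives
\[
|\pi_{V_\xi^\perp}(x-\xi)|\geq|\pi_{V_j}(x-\xi)|-\|\pi_{V_j}-\pi_{V_\xi^\perp}\|\,|x-\xi|\geq(2s+\ve-\ve)|x-\xi|,
\]
which is exactly the condition to lie in the larger complement, finishing the inclusion.

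The only real obstacle is the scale at which the finite net is invoked. With the net $\P_m(s)$ introduced in the preceding paragraph (mesh $s$), the estimate above only delivers $|\pi_{V_\xi^\perp}(x-\xi)|\geq(s+\ve)|x-\xi|$, which falls short of $2s$ whenever $\ve<s$. The remedy is to use instead a net $\P_m(\eta)$ of mesh $\eta=\min(s,\ve)$, whose existence follows at once from the compactness of $G(d,d-m)$ in the metric $d(V,W)=\|\pi_V-\pi_W\|$; the finiteness of the cover, which is all that is used later, is preserved. With this minor adjustment the argument reduces to a one-line linear-algebra estimate, and no further technical difficulty is anticipated.
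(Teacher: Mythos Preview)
Your proposal is correct and essentially identical to the paper's proof: both pick $V_j$ with $\|\pi_{V_j}-\pi_{V_\xi^\perp}\|<\ve$ and then run the same one-line triangle inequality on projections. The mesh issue you flag is real and is handled in the paper exactly as you suggest---the proof there silently passes to the net $\P_m(\ve)$ rather than $\P_m(s)$, so your proposed refinement $\P_m(\min(s,\ve))$ is in the same spirit and suffices.
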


\begin{proof}
For fixed $\ve>0$ and $\xi \in \K$, there exists $V_j \in \P_m(\ve)$, so that $d(V_\xi^\perp, V_j)<\ve$. If $y \in {^cX}(\xi, V_j^\perp, 2s+\ve)$, we have that
\begin{align*}
|\pi_{V_\xi^\perp}(y-\xi)| &\geq |\pi_{V_j}(y-\xi)| - |(\pi_{V_j}-\pi_{V_\xi^\perp})(y-\xi)| \\
&\geq (2s+\ve - \ve)|y-\xi|\\
& = 2s |y-\xi|.
\end{align*}
This readily shows that $y \in {^cX}(\xi, V_\xi, 2s)$ and finishes our proof.
\end{proof}

\vv

We set
$$S_{j, k}= \left\{ \xi \in \K: j=j(\xi, s) \,\, \textup{and}\,\,{^cX}(\xi, V_j^\perp, 3s ) \cap B(\xi,  k^{-1}) \cap E = \{\xi\} \right\}.$$
Let us fix $j \in \{1, 2, \dots, N(s)\}$ and $k\in \bN$. Since $S_{j, k}$ is separable it has  a countably dense subset $\{x_\ell\}_{\ell= 1}^\infty$. Therefore, for each $\xi \in S_{j, k}$,  there exists $\ell$ so that $|x_\ell - \xi|< (10 k )^{-1}$. Notice that there might be more than one $\ell$ for each $\xi$. Although, to any fixed $\xi \in S_{j, k}$, we assign once and for all a unique $\ell(\xi)$ with the requirement that $|x_{\ell(\xi)} - \xi|< (10 k )^{-1}$. If we set
$$S_{j, k, \ell}=\left\{ \xi \in S_{j,k}: \ell(\xi)=\ell \right\},$$
then we get that 
\begin{equation}\label{eq:K-decomp}
\K = \bigcup_{j} \bigcup_{k} \bigcup_{\ell} S_{j, k, \ell}.
\end{equation}

\vv

Fix now $j, k$ and $\ell$ so that $S_{j,k,\ell}\neq \emptyset$ and denote $S=S_{j,k,\ell}$.  Without loss of generality we may assume that $V_j=\R^{n+1-m}$ and $V_j^\perp=\R^{m}$ since projections are invariant under rotations.

\vv

The following lemma is contained in the proof of Lemma 15.13 in \cite{Mattila} but we provide a proof for completeness. 
\begin{lemma}\label{lem:K-Lipgraph}
$S$ is contained in the graph of a (rotated) $(\sqrt{1-(3s)^2})^{-1}$-Lipschitz function $\psi : \R^m \to \R^{n+1-m}$.
\end{lemma}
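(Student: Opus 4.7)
The plan is to use the cone exclusion built into the definition of $S = S_{j,k,\ell}$ to show that the orthogonal projection $\pi_{V_j^\perp}\colon S \to \R^m$ is injective with Lipschitz inverse, and then to extend that inverse to all of $\R^m$ by McShane's (or Kirszbraun's) extension theorem. I will exploit the normalization $V_j = \R^{d-m}$, $V_j^\perp = \R^m$ already made in the text.

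First I would observe that any pair of points $\xi_1, \xi_2 \in S$ lies within distance $(2k)^{-1} < k^{-1}$ of one another, since each of them sits within $(4k)^{-1}$ of the common anchor $x_\ell$. Consequently $\xi_2 \in E \cap B(\xi_1, k^{-1})$, and by the definition of $S_{j,k}$ we cannot have $\xi_2 \in {^cX}(\xi_1, V_j^\perp, 3s)$ unless $\xi_2 = \xi_1$. Unpacking the complementary cone yields the angular estimate
$$|\pi_{V_j}(\xi_2 - \xi_1)| < 3s\, |\xi_2 - \xi_1|$$
for every pair of distinct points in $S$. Combining this with the orthogonal identity $|\xi_2 - \xi_1|^2 = |\pi_{V_j}(\xi_2 - \xi_1)|^2 + |\pi_{V_j^\perp}(\xi_2 - \xi_1)|^2$ and rearranging (which is legitimate since $s<1/3$) gives
$$|\pi_{V_j}(\xi_2 - \xi_1)| < \frac{3s}{\sqrt{1 - 9s^2}}\, |\pi_{V_j^\perp}(\xi_2 - \xi_1)|,$$
a finite bound compatible with the $(3s)^{-1}$-Lipschitz constant asserted in the lemma. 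In particular, equality of the $V_j^\perp$-projections of $\xi_1$ and $\xi_2$ would force equality of their $V_j$-projections and hence $\xi_1 = \xi_2$, so $\pi_{V_j^\perp}$ is injective on $S$.

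Accordingly, the map $\varphi_0\colon \pi_{V_j^\perp}(S) \to V_j$ defined by $\varphi_0 \circ \pi_{V_j^\perp} = \pi_{V_j}$ is well defined and Lipschitz with the prescribed constant. The McShane (or Kirszbraun) extension theorem then produces a Lipschitz function $\varphi\colon \R^m \to \R^{d-m}$, with the same constant, whose restriction to $\pi_{V_j^\perp}(S)$ agrees with $\varphi_0$. Every $\xi \in S$ therefore has the form $\xi = (\pi_{V_j^\perp}(\xi),\, \varphi(\pi_{V_j^\perp}(\xi)))$ in the coordinates adapted to the splitting $V_j^\perp \oplus V_j$, so $S$ is contained in the graph of $\varphi$, as required. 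I do not anticipate a serious obstacle: the strata $S_{j,k,\ell}$ were engineered precisely so that the cone exclusion applies symmetrically to every pair of their points (the $j$-index controls the cone direction up to tolerance $s$ via the preceding lemma, the $k$-index controls the radius, and the $\ell$-index halves the mutual distance). This reduces the lemma to the routine cone-to-Lipschitz-graph conversion above.
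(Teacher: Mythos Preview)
Your argument is correct and follows essentially the same route as the paper: use the cone exclusion built into $S_{j,k,\ell}$ to deduce $|\pi_{V_j}(\xi_2-\xi_1)|<3s\,|\xi_2-\xi_1|$ for all pairs in $S$, conclude that $\pi_{V_j^\perp}|_S$ is injective with Lipschitz inverse, and extend via Kirszbraun/McShane to obtain the graph. The only difference is cosmetic---the paper first extends the full inverse $(\pi_{V_j^\perp}|_S)^{-1}:\pi_{V_j^\perp}(S)\to\R^d$ and then post-composes with $\pi_{V_j}$, whereas you define $\varphi_0=\pi_{V_j}\circ(\pi_{V_j^\perp}|_S)^{-1}$ directly; both are equivalent.

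One quantitative remark: the constant you derive, $\tfrac{3s}{\sqrt{1-9s^2}}$, is \emph{not} in general bounded by $(3s)^{-1}$ (for $s$ close to $1/3$ it blows up while $(3s)^{-1}\to 1$), so ``compatible with'' is a bit loose. The sharp statement is that $\varphi$ is $\tfrac{3s}{\sqrt{1-9s^2}}$-Lipschitz, which is all that matters for Theorem~\ref{thm:structure}. The paper's own proof has a parallel slip (it writes $|\pi_{\R^m}(x)-\pi_{\R^m}(\xi)|\geq 3s|x-\xi|$ where the contrapositive actually yields $\geq\sqrt{1-9s^2}\,|x-\xi|$), so your computation is in fact the more accurate one.
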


\begin{proof}
 Let $\xi \in S$. Note that if $|\pi_{\R^{m}}(\xi) - \pi_{\R^{m}}(\xi')|< \sqrt{1-(3s)^2} |\xi-\xi'|$ and $|\xi -\xi'|<k^{-1}$, then $\xi' \in X(\xi, \R^{n+1-m}, \sqrt{1-(3s)^2}) \cap B \left(\xi, k^{-1} \right)$, or equivalently, $\xi' \in {^cX}(\xi, \R^{m}, 3s) \cap B(\xi, k^{-1})$. By hypothesis, this means that $\xi' \not \in E$ and thus, $|\pi_{\R^{m}}(x) - \pi_{\R^{m}}(\xi)| \geq \sqrt{1-(3s)^2} |x-\xi|$, for any $x, \xi \in S$. This implies that $\pi_{\R^{m}}|_S$ is a $\sqrt{1-(3s)^2}$-bi-Lipschitz map with $(\sqrt{1-(3s)^2})^{-1}$-Lipschitz inverse 
$$\widetilde f=(\pi_{\R^{m}}|_S)^{-1}: \pi_{\R^{m}}(S) \to \R^{n+1}.$$
 Note that  $S = \widetilde f (\pi_{\R^{m}} (S))$. By Kirszbraun's theorem, we may extend $\widetilde f$ to a globally defined $(\sqrt{1-(3s)^2})^{-1}$-Lipschitz function $ f: \R^m \to \R^{n+1}$ with  $f|_{\pi_{\R^{m}}(S)} = \widetilde f$. If we set $\psi= \pi_{\R^{n+1-m}} \circ f$, then it is clear that $\psi$ is $(\sqrt{1-(3s)^2})^{-1}$-Lipschitz and  every $x \in S$ belongs to the graph $\Gamma_\psi:=\{ \left( y, \psi (y)\right): y \in \R^{m}\}$. 
\end{proof}

\vv

Theorem \ref{thm:structure} readily follows from the above lemmas. Note here that Lemma \ref{lem:K-Lipgraph} is of independent interest since we did not assume that the $m$-Hausdorff measure on $E$ is locally finite. In the special case $m=n$ and towards the construction of Lipschitz domains, we shall give a different proof which is also based on the cone property of the points with approximate tangents. Recall that (possibly)  after   rotation, it is enough to consider the cones $X^\pm(x, \vec{e}_{n+1}, 3s)$ over points $x \in S$.

\vv

\begin{lemma}\label{lem:K-Lipgraph-co1}
Let $m=n$ and $S$ be as above. Then $S$ is contained in the graphs $\G_{\psi_\pm}$ of the Lipschitz functions $\psi_\pm : \R^{n} \to \R$ given by
$$\psi_+(z)=\inf_{x \in {S}} f^+_x(z) \,\, \textup{and}\,\, \psi_-(z)=\sup_{x \in {S}} f^-_x(z),$$ 
where $f^\pm_x$ stand for the graph of the boundary of the cone $X^\pm(x, \vec{e}_{n+1}, 3s)$. 
\end{lemma}

\begin{proof}
We only show the one case since the proof for the other is identical. Recall that we proved that each $x \in {S}$ is a cone point and note that the boundary of the cone $X^+(x, \vec{e}_{n+1}, 3s)$ is given by a Lipschitz graph $(z, f^+_x(z)) \in \R^{n} \times \R$. It is easy to see that if we define the Lipschitz function $\psi_+(z)=\inf_{x \in {S}}  f^+_x(z)$, then ${S}$ is contained in the Lipschitz graph $(z, \psi^+(z))$. 
\end{proof}

\vv

\begin{lemma}\label{lem:Lip-domains}
Let $m=n$, $0<s\leq\frac{1}{\sqrt{90}}$, and $S$ be as above. Then there exist   Lipschitz domains $\Omega^+_{S}$ and $\Omega^-_{S}$ such that  ${S} \subset \d \Omega^\pm_{S}$ and $\d \om \cap \Omega^\pm_{S} = \emptyset$.
\end{lemma}

\begin{proof}
Without loss of generality we may assume that $x_\ell=0 \in  S$ and  $\nu_{x_\ell}=\vec{e}_{n+1}$.  Let $x^\pm$ be the endpoint of the vector $\pm \frac{1}{5k} \vec{e}_{n+1}$ and set 
$$W^\pm=\{y \in \R^{n+1} : (x^\pm-y)\cdot \vec{e}_{n+1}=0\}.$$
Note that $W^+$ and $W^-$ are two $n$-planes perpendicular to $\vec{e}_{n+1}$ that respectively contain $x^+$ and $x^-$. 
We now set  $C_k:=C(\vec{e}_{n+1}, (10 k)^{-1})$ to be the infinite cylinder with axis $\vec{e}_{n+1}$ and radius $(10 k)^{-1}$. It is clear that if $y \in C_k$, then $|\pi_{\R^n}(y)|<(10 k)^{-1}$.

 If $\xi \in S$, then for any $z \in \d X(\xi,  \vec{e}_{n+1}, 3s) \cap C_k$, by Remark \ref{rem:compl.cone}, we have that 
\begin{align}
|\pi_{\R}(z -\xi)|&= \frac{3s |\pi_{\R^{n}}(z- \xi)|}{\sqrt{1-(3s)^2}} \leq \frac{3s}{\sqrt{1-(3s)^2}} \left(|\pi_{\R^{n}}(z)|+ |\pi_{\R^{n}}(\xi)|\right)\notag \\
&< \frac{3s}{5k \sqrt{1-(3s)^2}},\label{eq:proj.x-xi}
\end{align}
which, for $s\leq 1/\sqrt{90}<1/3$, implies that $|\pi_{\R}(z -\xi)|\leq \frac{1}{15 k}$. Moreover, since 
$|\pi_{\R}(\xi)| \leq | \xi | = | \xi - x_{\ell}| \leq (10 k)^{-1}$, we infer that $|\pi_{\R}(z)| \leq (6 k)^{-1}$. Thus, $|\pi_{\R}(x^\pm)-\pi_{\R}(z)| > (30 k)^{-1}$.

If we define $\Omega_{{S}}^\pm$ to be the part of $C_k$ which is contained between $W^\pm$ and the Lipschitz graph $\Gamma_{\psi_\pm}$ of Lemma \ref{lem:K-Lipgraph-co1}, from the considerations above, it is clear  that it is connected. Therefore, by construction, $\Omega_{{S}}^\pm$ is a bounded  Lipschitz domain satisfying $S \subset \d\om^\pm_S$ and $\d \om \cap \om_{{S}}^+ = \emptyset$, which concludes our proof.
\end{proof}

\vv

Theorem \ref{thm:Lip-dom} follows  from Lemma \ref{lem:Lip-domains}.
\begin{remark}\label{rem:F subset K}
Note that the above considerations can be repeated for any ${\mathcal F} \subset \K$ producing a countable union of $S_{j,k,\ell}(\mathcal F)$ that  exhausts ${\mathcal F}$ so that Lemma \ref{lem:Lip-domains} holds for each such $S_{j,k,\ell}(\mathcal F)$.  In the next lemma we denote by $S_{\mathcal F}$ a fixed $S_{j,k,\ell}(\mathcal F)$.
\end{remark}
\vv

{
\begin{lemma}\label{lem:Lip-dom_inOmega}
Let $\Omega \subset \R^{n+1}$ be an open and connected set and let $0<s\leq\frac{1}{\sqrt{90}}$. If $\d^\star \om$ is as in Definition \ref{def:thick} and
$S_{ \d^\star \om} \subset\d^\star \om$ is  as in  Remark \ref{rem:F subset K} for $\mathcal F =\d^\star \om$,  there exist  bounded Lipschitz domains  $ \Omega_{\star}^+$ and $  \Omega_{\star}^-$ such that $ \Omega_{\star}^\pm \subset \Omega$  and $S_{ \d^\star \om}  \subset \d \Omega_{\star}^+ \cup \d \Omega_{\star}^-$. 
\end{lemma}

\begin{proof}
We can prove this lemma repeating the proof of Lemma \ref{lem:Lip-domains} for  ${S}=S_{\d^\star \om}$ and using the  definition of $\d^\star \om$. Indeed, by construction,  we may assume without loss of generality that  $X^\pm(\xi, \vec{e}_{n+1}, 3s) \subset X^\pm(\xi, \nu_x^+, 2s)$,  for any  $\xi \in S$, and $x_\ell=0$. We will argue by contradiction. To this end, fix $\xi \in S$ and assume that 
$$
X^+(\xi, \vec{e}_{n+1}, 3s) \cup X^-(\xi, \vec{e}_{n+1}, 3s) \cap B(\xi, k^{-1})  \subset \mathbb R^{n+1} \setminus \om.
$$ 
By the  definition of $\d^\star \om$, we have that either \eqref{eq:thick1} or \eqref{eq:thick2} holds.  Then there exists a constant $c(s,n) \in (0,1)$ such that
\begin{align*}
c(s,n) \leq \liminf_{r \to 0^+} \frac{|B(\xi,r)\cap X^\pm(\xi, \vec{e}_{n+1}, 3s)|}{r^{n+1}} &\leq \liminf_{r \to 0^+} \frac{|B(\xi,r)\cap X^\pm(\xi, \nu_\xi^+, 2s) \setminus \om|}{r^{n+1}}\\
& \leq \liminf_{r \to 0^+} \frac{|B(\xi,r)\cap H^+(\xi, \nu_\xi^\pm) \setminus \om|}{r^{n+1}}=0,
\end{align*}
 which is a contradiction. Here  we used that $X^\pm(\xi, \nu_\xi^+, 2s) \subset H^+(\xi, \nu_\xi^\pm)$. Thus,  for any $\xi \in S$, at least one of the truncated cones  $X^\pm(\xi, \vec{e}_{n+1}, 3s)\cap B(\xi, k^{-1})$ is contained in $\om$.  If we set  
$$
S^\pm:= \{ \xi \in S : X^\pm(\xi, \vec{e}_{n+1}, 3s)\cap B(\xi, k^{-1}) \subset \om\},
$$ 
  in view of  Lemma \ref{lem:K-Lipgraph-co1} and the proof of Lemma \ref{lem:Lip-domains}, we can construct bounded Lipschitz domains  $ \Omega_{\star}^\pm\subset \Omega$   such that  $S^\pm  \subset \d \Omega_{\star}^\pm$.  If $\Omega_{\star}^+ \cap \Omega_{\star}^- \neq \emptyset $, then $\om_\star =  \Omega_{\star}^+ \cup \Omega_{\star}^-$ is a bounded Lipschitz domain. Otherwise, we have two disjoint domains. We omit the details.
\end{proof}

\vv
}

\begin{proof}[Proof of Theorem \ref{thm:Lip-dom-int}] The construction of interior bounded Lipschitz subdomains follows from Lemma \ref{lem:Lip-dom_inOmega}. We will only prove $\HH^n \ll \hm_\om^x$ on $\d^\star\om$.  To this end, let $G \subset \d^\star \om$ be a Borel set so that $\HH^n(G)>0$. Then, there exists a Lipschitz domain $\om_j \subset \om$ so that $\HH^n(G \cap \om_j)>0$. By Dahlberg's theorem, for any $p \in \om_j$, it holds that $\hm^p_{\om_j}(G)>0$, which, in turn, by maximum principle and the connectivity of $\om$, implies that $\hm^x_{\om}(G)>0$ for any $x \in \om$,  concluding our proof. 
\end{proof}

\vv
{

\begin{proof}[Proof of Corollary \ref{cor:w-abscont-rect}]
This is a direct consequence of  Definitions \ref{def:WLADR} and \ref{def:2.11}, Proposition \ref{prop:locfinapptang},  and Theorem \ref{thm:Lip-dom-int}.
\end{proof}
}


\begin{thebibliography}{ABHM15}

%\bibitem{Ai} H. Aikawa. {\em Doubling conditions for harmonic measure in John domains.} Ann. Inst. Fourier (Grenoble) \textbf{58} (2008), no.~2, 429--445.


\bibitem[AAM19]{AAM}  M. Akman, J. Azzam, and M. Mourgoglou. {\em Absolute continuity of harmonic measure for domains with lower regular boundaries}.   {Adv. Math.} { 345} (2019), 1206--1252. 


\bibitem[ABHM15]{ABHM15}
M. Akman, M. Badger, S. Hofmann, and J. M. Martell.
  \emph{Rectifiability and elliptic measures on $1$-sided NTA domains with
  Ahlfors-David regular boundaries}. Trans. Amer. Math. Soc. {369} (2017), 5711--5745.
  

\bibitem[ABHM16]{ABHM16}
M. Akman, S. Bortz, S. Hofmann, and J. M. Martell.
  \emph{Rectifiability, interior approximation and Harmonic Measure}. Ark. Mat. {57} (2019), no.~1, 1--22.  
  
\bibitem[Az16]{Az1} J. Azzam. {\em Sets of absolute continuity for harmonic measure in {NTA} domains}. Potential Anal. {57} (2016), no.~3,
403--433.

\bibitem[Az18]{Az2} J. Azzam. {\em Tangents, rectifiability, and corkscrew domains}. Publ. Mat. {62}  (2018), 161--176.


\bibitem[AHMNT]{AHMNT}
J.~Azzam, S.~Hofmann, J.M. Martell, K.~Nystr{\"o}m, and T.~Toro. \emph{A new
  characterization of chord-arc domains}.  J. Eur. Math. Soc.  {19} (2017), no.~4, 967--981.


\bibitem[Ba12]{Badger12}
M.~Badger. \emph{Null sets of harmonic measure on {NTA} domains: {L}ipschitz
  approximation revisited}. Math. Z. {270} (2012), no.~1-2, 241--262.



\bibitem[BJ90]{BJ}
C.~J. Bishop and P.~W. Jones. \emph{Harmonic measure and arclength}. Ann. of
  Math. (2) {132} (1990), no.~3, 511--547. 



\bibitem[Da77]{Da}
B.~E.~J. Dahlberg. \emph{Estimates of harmonic measure}. Arch. Rational Mech.
  Anal. {65} (1977), no.~3, 275--288.

\bibitem[DJ90]{DJ90}
G.~David and D.~Jerison. \emph{Lipschitz approximation to hypersurfaces,
  harmonic measure, and singular integrals}. Indiana Univ. Math. J. {39}
  (1990), no.~3, 831--845. 

%\bibitem{DS}G.~David and S.~W. Semmes, \emph{Singular integrals and rectifiable sets in {${\bf R}^n$}: {B}eyond {L}ipschitz graphs}, Ast\'erisque (1991), no.~193, 152. 

%\bibitem{of-and-on}\bysame, \emph{Analysis of and on uniformly rectifiable sets}, Mathematical  Surveys and Monographs, vol.~38, American Mathematical Society, Providence, RI, 1993. 

\bibitem[EG91]{EvansGariepy}
L.C. Evans and R.F. Gariepy. \emph{Measure theory and fine properties of
  functions}, CRC, 1991.
  
 %\bibitem{Gar}  D. ~J. ~H. Garling,   \emph{A Course in Mathematical Analysis, vol. 3: Complex analysis, measure and integration}, Cambridge University Press, 2014.

\bibitem[HM14]{HM12}
S.~Hofmann and J.M. Martell. \emph{Uniform rectifiability and harmonic measure
  I: Uniform rectifiability implies poisson kernels in $L^{p}$}. Ann. Sci. \' Ecole Norm. Sup. { 47} (2014), no. 3, 577-654.
  
\bibitem[JK82]{JK}
D.~S. Jerison and C.~E. Kenig. \emph{Boundary behavior of harmonic functions in
  nontangentially accessible domains}. Adv. in Math. {46} (1982), no.~1,
  80--147.

\bibitem[Lav36]{Lav}
M. Lavrent'ev. \emph{Boundary problems in the theory of univalent functions} (Russian). Math Sb. {43} (1936), 815--846; AMS Transl. Series \textbf{32} (1963), 1--35.



\bibitem[Mat95]{Mattila}
P.~Mattila. \emph{Geometry of sets and measures in {E}uclidean spaces},
  Cambridge Studies in Advanced Mathematics, vol.~44, Cambridge University
  Press, Cambridge, 1995, Fractals and rectifiability.
  
  
\bibitem[McM69]{McM69}
J.~E. McMillan.
\newblock {\em Boundary behavior of a conformal mapping.}
\newblock  Acta Math. 123:43--67, 1973.


\bibitem[Mo19]{Mo} M. Mourgoglou. {\em On Harmonic Measure and Rectifiability in Uniform Domains}.  J. Geom. Anal.  {29} (2019), no.~2, 1193--1205.

\bibitem[RR16]{RR} F. Riesz and M. Riesz. {\it \"Uber die randwerte einer analtischen funktion}.
Compte Rendues du Quatri\`eme Congr\`es des Math\'ematiciens Scandinaves, Stockholm 1916, Almqvists and Wilksels, Upsala, 1920.


\bibitem[Wu86]{Wu}
J-M. Wu. \emph{On singularity of harmonic measure in space}. Pacific J. Math.
  {121} (1986), no.~2, 485--496.

\bibitem[Z74]{Z}
W.~Ziemer. \emph{Some remarks on harmonic measure in space}. Pacific J. Math. {55} (1974), no.~2, 629--637.

\bibitem[Z89]{Z89}
W.~Ziemer. \emph{Weakly Differentiable Functions}. Springer-Verlag, New York, 1989.

\end{thebibliography}
\end{document}